\theoremstyle{plain}
\newtheorem{thm}{Theorem}
\newtheorem{lem}[thm]{Lemma}
\theoremstyle{definition}
\newtheorem{defn}[thm]{Definition}
\newtheorem{rmk}[thm]{Remark}
\newtheorem{construction}[thm]{Construction}
\numberwithin{thm}{section} \numberwithin{equation}{section}
\newcommand{\sO}{{\mathcal O}}
\begin{document}

\title{The H-N filtration of bundles as Frobenius pull-back}
\author{Mingshuo Zhou}
\address{Academy of Mathematics and Systems Science, Chinese Academy of Science,
Beijing, P. R. of China} \email{zhoumingshuo@amss.ac.cn}
\date{April 20, 2012}
\thanks{The author is supported by gucas.}

\begin{abstract} Let $X$ be a smooth projective curve of genus {$g\geq 2$} over an
algebraic closed field $k$ of characteristic $p>0$. Let $F:
X\rightarrow X_1$ be the relative Frobenius morphism, and $E$ be a
semistable torsion free sheaf on $X$. For a semistable vector bundle
$E$, one may guess that the length of Harder-Narasimhan filtration
is not more than $p$. In this paper, I give a negative answer to
this by giving an explicit example.
\end{abstract}
\maketitle
\begin{quote}

\end{quote}

\section{introduction}
Let $X$ be a smooth projective curve of genus $g\geq 2$ defined over
an algebraic closed field $k$ of characteristic $p>0$. The absolute
Frobenius morphism $F_X: X\rightarrow X$ is induced by
$\sO_X\rightarrow \sO_X, f\mapsto f^p$. Let $F: X\rightarrow X_1:=
X\times_k k$ denote the relative Frobenius morphism over $k$. One of
the themes is to study its action on the geometric objects on $X$.

Recall that a vector bundle $E$ on a smooth projective curve is
called semistable (resp. stable) if $\mu (E')\leq \mu(E)$ (resp.
$\mu(E')< \mu(E)$) for any nontrivial proper subbundle $E'\subset
E$, where $\mu(E)$ is the slope of $E$. Semistable bundles are basic
constituents of vector bundle in the sense that any bundle $E$
admits a unique filtration $$HN_{\bullet}(E): 0=HN_0(E)\subset
HN_1(E)\subset \cdots \subset HN_{\ell}(E)=E'$$ which is the so
called Harder-Narasimhan filtration, such that

\noindent (1) $gr_i^{HN}(E):= HN_i(E)/HN_{i-1}(E) (1\leq i\leq
\ell)$ are semistable;

\noindent (2) $\mu(gr_1^{HN}(E))>\mu(gr_2^{HN}(E))>\cdots
>\mu(gr_{\ell}^{HN}(E))$.
The integer $\ell$ is called the length of the Harder-Narasimhan
filtration of bundle $E$. It measures how far is a vector bundle
from being semistable in some sense, and it's clear $E$ is
semistable if and only if $\ell =1$. It is known that $F_{\ast}$
preserves the stability of vector bundles (\cite{S1}), but $F^{\ast}$ does not preserve
the semistability of vector bundle (\cite{DG} for example).

Given a semistable vector bundle $E$ on $X$, then $F^{\ast}E$ may
not be semistable, it's natural to consider the length of the
Harder-Narasimhan filtration of $F^{\ast}E$. If $E=F_{\ast}W$ where
$W$ is a semistable bundle on $X$, the length of Harder-Narasimhan
filtration of $F^{\ast}E$ is $p$ by Sun's Theorem (cf.
\cite[THeorem~2.2]{S1} is $p$. In this case, the instability of
$F^{\ast}E$ is $(p-1)(2g-2)$ which reaches the upper bound of
$F^{\ast}E$. Actually, in \cite{VC}, the authors prove the
instability $\mathrm{I}(F^{\ast}E)\leq (p-1)(2g-2)$ for a semistable
bundle $E$ on $X$. In consideration of the above statement, one may
guess that $\forall$ semistable bundle $E$ on $X$, the length of the
Harder-Narasimhan filtration of $F^{\ast}E$ is not more than $p$.

In this note, we give a negative answer to the above guess by
constructing an example. Briefly, give a polarized smooth projective
surface ($Y, H$) with $\mu(\Omega^1_Y)>0$ is semistable, for a
suitable semistable vector bundle $W$ on $Y$ (take $W$ to be line
bundle for example), $F_{\ast}W$ is semistable and
$F^{\ast}F_{\ast}W$ has the Harder-Narasimhan filtration whose
length is bigger than $p$. Restricting $F^{\ast}F_{\ast}W$ and the
Harder-Narasimhan filtration to a generic hyperplane $X$ of high
degree, by some analysis, one can check it's actually the
Harder-Narasimhan filtration of $F^{\ast}(E|_X)$, where
$E=F_{\ast}W$ and the stability of $E|_{X}$ would been proved in the
text.

\section{Construction of the example}
Let $Y$ be a smooth projective variety of dimension $n$ over an
algebraic closed field $k$ with $char(k)=p>0$. Fix an ample divisor
$H$ on $Y$, for a torsion free sheaf $E$, the slope of $E$ is
defined as $$\mu(E)=\frac{c_1(E)\cdot H^{n-1}}{rk(E)}$$ where
$rk(E)$ denotes the rank of $E$. Then
\begin{defn} A torsion free sheaf $E$ on $Y$ is called semistable (resp.
stable) if for any subsheaf $E'\subset E$ with $E/E"$ torsion free,
we have $$\mu(E')\leq (resp. <) \mu(E).$$
\end{defn}
\begin{thm} (Harder-Narasimhan filtration) For any torsion free
sheaf $E$, there is a unique filtration $$HN_{\bullet}(E):
0=HN_0(E)\subset HN_1(E)\subset \cdots \subset HN_{\ell}(E)=E'$$
which is the so called Harder-Narasimhan filtration, such that

\noindent (1) $gr_i^{HN}(E):= HN_i(E)/HN_{i-1}(E) (1\leq i\leq
\ell)$ are semistable;

\noindent (2) $\mu(gr_1^{HN}(E))>\mu(gr_2^{HN}(E))>\cdots
>\mu(gr_{\ell}^{HN}(E))$.
\end{thm}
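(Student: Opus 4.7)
The plan is to prove existence of $HN_\bullet(E)$ by first constructing a canonical maximal destabilizing subsheaf $HN_1(E) \subset E$, and then iterating on the quotient. The crucial preliminary input is a boundedness statement: the set
\[
S(E) = \{\mu(F) \mid 0 \neq F \subseteq E,\ F \text{ coherent}\}
\]
is bounded above. I would cite (or invoke) Grothendieck's lemma on boundedness of quotients of a fixed coherent sheaf with bounded Hilbert polynomial, which forces $\sup S(E) = \mu_{\max}(E) < \infty$ and in fact is attained. Moreover, the set of subsheaves of $E$ attaining $\mu_{\max}(E)$ is closed under sums: if $F_1, F_2 \subset E$ both have slope $\mu_{\max}$, then from the exact sequence $0 \to F_1 \cap F_2 \to F_1 \oplus F_2 \to F_1 + F_2 \to 0$ and the inequalities $\mu(F_1 \cap F_2) \leq \mu_{\max}$ and $\mu(F_1 + F_2) \leq \mu_{\max}$, equality is forced throughout. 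This lets me define $HN_1(E)$ as the (unique) subsheaf of maximal slope and, among those, of maximal rank, equivalently the sum of all subsheaves attaining $\mu_{\max}$.

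Second, I would verify the required formal properties of $HN_1(E)$. It is semistable: any subsheaf $F \subset HN_1(E)$ with $\mu(F) > \mu(HN_1(E))$ would contradict maximality of the slope in $E$. The quotient $E/HN_1(E)$ is torsion free: the preimage of any torsion subsheaf would contain $HN_1(E)$ strictly and still have slope $\geq \mu_{\max}(E)$ (in dimension $n$ the contribution of a torsion sheaf to $c_1 \cdot H^{n-1}$ is non-negative), contradicting the maximal rank property. Finally $\mu_{\max}(E/HN_1(E)) < \mu(HN_1(E))$: otherwise the preimage of a subsheaf of $E/HN_1(E)$ of slope $\geq \mu(HN_1(E))$ would itself achieve $\mu_{\max}(E)$ and exceed the rank of $HN_1(E)$, a contradiction.

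Third, I would run induction on $\rank(E)$. Applying the inductive hypothesis to $E/HN_1(E)$ gives a filtration $0 = \overline{HN}_0 \subset \overline{HN}_1 \subset \cdots \subset \overline{HN}_{\ell-1} = E/HN_1(E)$ with semistable graded pieces of strictly decreasing slopes, all strictly smaller than $\mu(HN_1(E))$ by the previous paragraph; pulling this back to $E$ via the quotient $E \twoheadrightarrow E/HN_1(E)$ and prepending $HN_1(E)$ gives the desired filtration.

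For uniqueness, suppose $0 \subset G_1 \subset \cdots \subset G_m = E$ is another filtration satisfying (1) and (2). Comparing $G_1$ with $HN_1(E)$: the composition $G_1 \hookrightarrow E \twoheadrightarrow E/HN_1(E)$ must vanish, since otherwise its image would be a nonzero subsheaf of $E/HN_1(E)$ receiving a map from a semistable sheaf of slope $\mu(G_1) \geq \mu(gr_1^{HN}(E)) = \mu(HN_1(E)) > \mu_{\max}(E/HN_1(E))$, which is impossible. Hence $G_1 \subseteq HN_1(E)$; the reverse inclusion follows by the symmetric argument using maximality of $HN_1(E)$, and then one concludes by induction on $E/HN_1(E) = E/G_1$. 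The main obstacle is really just the boundedness/existence of $\mu_{\max}$, which I would take as a standard input via Grothendieck's lemma; everything else is a routine manipulation of slope inequalities and semistability.
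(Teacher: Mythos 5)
The paper never actually proves this theorem: it states it and, in Remark 2.3, defers to \cite[Theorem~1.3.4]{DM}, where the filtration is produced via Gieseker stability and the $\mu$-statement is extracted afterwards. Your route is genuinely different and more self-contained: you construct the maximal destabilizing subsheaf directly in the slope setting (boundedness of slopes of subsheaves via Grothendieck's lemma, attainment of the supremum because the possible slopes have denominators bounded by $rk(E)$, closure of the maximal-slope subsheaves under sums, then $HN_1(E)$ as the sum of all of them), and induct on rank. This is the standard direct argument for the slope Harder--Narasimhan filtration, and it is arguably better adapted to the statement as written than the paper's citation, since the Gieseker--HN filtration only yields weakly decreasing slopes and some extra care is needed to pass to the strict inequalities in (2). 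One phrasing slip in your existence half: when you prove $E/HN_1(E)$ is torsion free, the preimage of a torsion subsheaf has the \emph{same} rank as $HN_1(E)$ (torsion has rank zero), so ``maximal rank'' is not the contradiction; what your setup actually gives is that this preimage still attains $\mu_{\max}(E)$ (torsion contributes nonnegatively to $c_1\cdot H^{n-1}$), hence is contained in the sum of all maximal-slope subsheaves, i.e. in $HN_1(E)$, forcing the torsion to vanish. With your ``sum of all'' definition this is fine.

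There is, however, a genuine gap in the uniqueness step: you assert $\mu(G_1)\ge \mu(gr_1^{HN}(E))=\mu(HN_1(E))$, but a priori you only know the reverse inequality $\mu(G_1)\le \mu_{\max}(E)=\mu(HN_1(E))$; the inequality you invoke is exactly what must be proved about an arbitrary filtration satisfying (1) and (2). The standard repair: first record that a nonzero map $A\to B$ of $\mu$-semistable torsion-free sheaves forces $\mu(A)\le\mu(B)$ (pass to the image and its saturation); then, given the competing filtration $G_\bullet$, take $j$ minimal with $HN_1(E)\subseteq G_j$, so that $HN_1(E)\to G_j/G_{j-1}$ is nonzero and hence $\mu_{\max}(E)=\mu(HN_1(E))\le\mu(G_j/G_{j-1})\le\mu(G_1)$, which yields $\mu(G_1)=\mu_{\max}(E)$. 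Equivalently, prove the lemma that for any filtration with semistable quotients one has $\mu_{\max}(E)\le\max_i\mu(gr_i)$; note that the same lemma is also what justifies $\mu_{\max}(E/G_1)<\mu(G_1)$, which your ``symmetric argument'' for the inclusion $HN_1(E)\subseteq G_1$ silently uses. Once this lemma is inserted, both inclusions $G_1\subseteq HN_1(E)$ and $HN_1(E)\subseteq G_1$ follow as you describe, and the induction on $E/G_1$ closes the proof.
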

\begin{rmk} In \cite[Theorem~1.3.4]{DM}, the proof of existence of
the filtration is given in terms of Gieseker stability. In
particular, $gr_i^{HN}(E)$ are Gieseker semistable, thus they are
$\mu$ semistable torsion free sheaves. We call the integer $\ell$ is
the length of the Harder-Narasimhan filtration.
\end{rmk}
\begin{defn} Let $E$ be a semistable sheaf. A Jordan-H\"{O}lder
filtration of $E$ is a filtration $$0=E_0\subset E_1\subset\cdots
\subset E_{\ell}=E$$ such that the factors $gr_i(E)=E_i/E_{i-1}$ are
stable .
\end{defn}
\begin{rmk} Jordan-H\"{o}lder filtration always exists. As in Remark
2.3, we can get the factors $gr_i(E)$ are torsion free.
\end{rmk}
Let $F: Y\rightarrow Y_1:= Y\times_k k$ denote the relative
Frobenius morphism over $k$.
\begin{lem} Let $X$ be a closed subvariety of $Y$, then the induced
morphism $F|_C: C\rightarrow C_1$ is the relative Frobenius
morphism.
\end{lem}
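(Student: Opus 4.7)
The plan is to verify the statement purely from the universal property that characterizes the relative Frobenius. Throughout, I interpret the (clearly typographic) $C$ in the statement as the given closed subvariety, so let $i:X \hookrightarrow Y$ be a closed immersion, and write $F_Y^{\mathrm{abs}}:Y\to Y$ and $F_X^{\mathrm{abs}}:X\to X$ for the absolute Frobenius morphisms. Recall that $Y_1 = Y \times_k k$ is the base change along the absolute Frobenius $F_k:k\to k$, and the relative Frobenius $F=F_{Y/k}:Y\to Y_1$ is the unique $k$-morphism whose composition with the first projection $\mathrm{pr}_1:Y_1\to Y$ equals $F_Y^{\mathrm{abs}}$, and whose composition with the structure map $Y_1\to\Spec k$ equals the given structure map $Y\to\Spec k$.

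First I would observe that $X_1 = X\times_k k$ embeds into $Y_1 = Y\times_k k$ as a closed subscheme via $\tilde{\imath}:=i\times_k \mathrm{id}_k$, and that this embedding is compatible with the first projections, i.e.\ $\mathrm{pr}_1^{Y_1}\circ\tilde{\imath} = i\circ\mathrm{pr}_1^{X_1}$. Second, using the naturality of the absolute Frobenius (it commutes with every morphism of $\F_p$-schemes, in particular with $i$), I get the identity
\[
F_Y^{\mathrm{abs}}\circ i \;=\; i\circ F_X^{\mathrm{abs}}.
\]

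Next I would show that the composite $F\circ i : X \to Y_1$ factors through $\tilde{\imath}:X_1 \hookrightarrow Y_1$. By the universal property of the fiber product $X_1 = X \times_k k$, giving a $k$-morphism $X\to X_1$ is the same as giving a morphism $X\to X$ over $\Spec k \xrightarrow{F_k}\Spec k$. The morphism $F_X^{\mathrm{abs}}:X\to X$ provides precisely such data, and this is how $F_{X/k}:X\to X_1$ is defined. On the other hand, $\mathrm{pr}_1^{Y_1}\circ F\circ i = F_Y^{\mathrm{abs}}\circ i = i\circ F_X^{\mathrm{abs}}$, which shows that $F\circ i$ lands inside the closed subscheme $\tilde{\imath}(X_1)$ and that the induced map $X\to X_1$ satisfies $\mathrm{pr}_1^{X_1}\circ (F|_X) = F_X^{\mathrm{abs}}$. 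Together with the fact that $F|_X$ is a $k$-morphism (which is automatic, since $F$ is), the uniqueness part of the universal property of the fiber product forces $F|_X = F_{X/k}$.

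I expect no substantive obstacle here; the entire statement is a diagram chase with the fiber-product square defining $(-)_1$ and the functoriality of the absolute Frobenius. The only thing to be mindful of is distinguishing absolute from relative Frobenius morphisms, and checking compatibility of the respective projections to $\Spec k$ so that the uniqueness clause of the universal property applies. Once these are set up, the equality $F|_X = F_{X/k}$ is immediate.
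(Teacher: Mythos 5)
Your proposal is correct, and it in fact supplies considerably more than the paper does: the paper's own ``proof'' consists of the single remark that the absolute case is trivial and the relative case is ``just a translation'' of it, with no argument given. Your diagram chase --- naturality of the absolute Frobenius applied to the closed immersion $i:X\hookrightarrow Y$, the identification of $X_1=X\times_k k$ with the preimage of $X$ under the projection $Y_1\to Y$, and the uniqueness clause in the universal property of the fiber product to pin down $F|_X=F_{X/k}$ --- is exactly the standard argument that the paper leaves unstated, and you were also right to read the $C$'s in the statement as typos for $X$. No gap; if anything, your write-up is the proof the paper should have contained.
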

\begin{proof} For the case of absolute Frobenius morphism, it's
trivial. In the relative case, it's just a translation of the
absolute case to the relative case.
\end{proof}

Now, let's restrict $Y$ to be a smooth projective variety of
dimension $n$ with $\Omega^1_Y$ is semistable and
$\mu(\Omega_Y^1)>0$ (For example, $Y$ can been chosen to be $C\times
C$ with $C$ is a smooth projective curve with genus $g\geq 2$). Let
$W$ be a torsion free sheaf on $Y$, define
$V_0:=V=F^{\ast}(F_{\ast}W)$, $V_1=
ker(F^{\ast}(F_{\ast}W))\twoheadrightarrow W$,
$$V_{\ell+1}:= ker((V_{\ell})\xrightarrow{\nabla}V\otimes_{\sO_Y}\Omega^1_Y\rightarrow (V/V_{\ell})\otimes_{\sO_Y}\Omega^1_Y)$$
where $\nabla: V\rightarrow V\otimes_{\sO_Y}\Omega^1_Y$ is the
canonical connection. Actually, for the above filtration we have
\begin{thm} (\cite[Theorem~3.7]{S1}) The filtration defined above is
$$0=V_{n(p-1)+1}\subset V_{n(p-1)}\subset\cdots \subset V_1\subset V_0=V=F^{\ast}(F_{\ast}W)$$ which has the following properties

\noindent (1) $\nabla(V_{\ell+1})\subset V_{\ell}\otimes \Omega_Y^1$
for $\ell\geq 1$, and $V_0/V_{\ell}\cong W$.

\noindent (2)
$V_{\ell}/V_{\ell+1}\xrightarrow{\nabla}(V_{\ell-1}/V_{\ell})\otimes
\Omega_Y^1$ are injective for $1\leq \ell n(p-1)$, which induced
isomorphisms $$\nabla^{\ell}: V_{\ell}/V_{\ell+1}\cong
W\otimes_{\sO_Y} T^{\ell}(\Omega_Y^1), \ \  0\leq \ell \leq n(p-1)$$
The vector bundle $T^{\ell}(\Omega_Y^1)$ is suited in the exact
sequence
$$\aligned &0\rightarrow Sym^{\ell-\ell(p)\cdot
p}(\Omega_Y^1 )\otimes
F^{\ast}\Omega_Y^{\ell(p)}\xrightarrow{\phi}Sym^{\ell-(\ell(p)-1)\cdot
p}(\Omega^1_Y)\otimes F^{'\ast}\Omega_Y^{\ell(p)-1}\\
&\rightarrow\cdots \rightarrow Sym^{\ell-q\cdot
p}(\Omega_1^Y)\otimes
F^{\ast}\Omega_Y^{q}\xrightarrow{\phi}Sym^{\ell-(q-1)\cdot
p}(\Omega^1_Y)\otimes F^{'\ast}\Omega_Y^{q-1}\\
&\rightarrow \cdots \rightarrow Sym^{\ell- p}(\Omega_1^Y)\otimes
F^{\ast}\Omega_Y^{1}\xrightarrow{\phi}Sym^{\ell
}(\Omega^1_Y)\rightarrow  T^{\ell}(\Omega^1_Y)\rightarrow 0\\
\endaligned$$
where $\ell(p)\geq 0$ is the integer such that $\ell-\ell(p)\cdot p
<p$ .
\end{thm}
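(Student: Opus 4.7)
The approach is to verify the claims in order, treating (1) as essentially bookkeeping and then concentrating the real work on the identification of the graded pieces in (2).

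First I would dispose of (1). The inclusion $\nabla(V_{\ell+1})\subset V_\ell\otimes\Omega^1_Y$ is immediate from the definition of $V_{\ell+1}$ as the kernel of the composite $V_\ell\xrightarrow{\nabla} V\otimes\Omega^1_Y\to (V/V_\ell)\otimes\Omega^1_Y$, since $\nabla$ then necessarily lands in the kernel $V_\ell\otimes\Omega^1_Y$ of the second arrow. The isomorphism $V_0/V_1\cong W$ is the adjunction counit $F^\ast F_\ast W\surj W$, whose kernel is $V_1$ by construction. A consequence of (1) is that the map $V_\ell/V_{\ell+1}\to (V/V_\ell)\otimes\Omega^1_Y$ induced by $\nabla$ factors through $(V_{\ell-1}/V_\ell)\otimes\Omega^1_Y$, and it is injective by the very definition of $V_{\ell+1}$.

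Next I would set up the local picture. Choosing \'etale coordinates $x_1,\dots,x_n$ on an affine open $U\subset Y$, the sheaf $F_\ast\sO_U$ becomes free over $\sO_{U_1}$ on the basis $\{x^a=x_1^{a_1}\cdots x_n^{a_n} : 0\le a_i<p\}$, and the canonical connection on $V|_U=F^\ast F_\ast W|_U$ has the pulled-back generators as its horizontal sections. Expanding $\nabla$ in this basis and comparing with the quotient map $V\surj W$ yields an explicit description of $V_\ell$ as the subsheaf cut out by a vanishing-order condition on coefficients. In this model one checks directly that iteration of $\nabla$ produces an embedding
\[
  \nabla^\ell : V_\ell/V_{\ell+1}\inj W\otimes_{\sO_Y}(\Omega^1_Y)^{\otimes \ell}
\]
for $0\le\ell\le n(p-1)$, whose image lies in the symmetric part $W\otimes\sym^\ell(\Omega^1_Y)$, the flatness of $\nabla$ forcing the iterated images to be symmetric.

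The main step, and the one I expect to be hardest, is to identify this image with $W\otimes T^\ell(\Omega^1_Y)$ via the displayed Koszul-type exact sequence. Locally the image of $\nabla^\ell$ is generated by divided-power symbols $dx^{(a)}$ with $|a|=\ell$, but once some $a_i\ge p$ these symbols collapse inside $\sym^\ell(\Omega^1_Y)$ because the characteristic-$p$ identity $d(x_i^p)=0$ kills them, while the ``missing'' classes reappear as forms pulled back along the Frobenius. Packaging these relations level by level for $q=\lfloor\ell/p\rfloor,\lfloor\ell/p\rfloor-1,\dots,1$ produces exactly the complex in the statement, with each $\sym^{\ell-qp}(\Omega^1_Y)\otimes F^\ast\Omega^q_Y$ accounting for $q$-fold applications of the $p$-th power relation. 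The key technical hurdle is then to verify exactness of this complex and the surjectivity of its last map onto the image of $\nabla^\ell$, which reduces to a Cartier-isomorphism style local calculation independent of $W$. Once this is checked on coordinate charts, globalization is automatic because all of the constructions are compatible with \'etale changes of coordinates.
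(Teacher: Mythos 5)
Two preliminary points of comparison. First, the paper you are working from contains no proof of this statement at all: it is Sun's theorem, quoted (with typographical slips) from \cite[Theorem~3.7]{S1}, so the only thing your proposal can be measured against is Sun's original argument. Second, in outline you do follow that argument: the formal verification of (1), then a local computation in \'etale coordinates using the monomial basis $\{x^a : 0\le a_i\le p-1\}$ of $F_{\ast}\sO_Y$ and the explicit canonical connection, ending with an identification of the graded pieces with truncated symmetric powers of $\Omega^1_Y$. Your treatment of (1), and of the injectivity of $V_{\ell}/V_{\ell+1}\rightarrow (V_{\ell-1}/V_{\ell})\otimes\Omega^1_Y$ as a formal consequence of the definition of $V_{\ell+1}$ together with (1) at the previous index, is correct and is indeed the easy part.

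As a proof, however, the proposal stops where the substance begins, and one of the mechanisms you invoke is not the right one. (a) The two claims that carry all the content --- that the filtration terminates, $V_{n(p-1)+1}=0$, equivalently that $\nabla^{\ell}$ is surjective onto $W\otimes T^{\ell}(\Omega^1_Y)$ for every $\ell\le n(p-1)$, and that the displayed complex computing $T^{\ell}(\Omega^1_Y)$ is exact --- are only announced as ``the key technical hurdle'' and never carried out; note that the exact length $n(p-1)+1$ is precisely what this paper uses later, so it cannot be left as an assertion. (b) The truncation is not caused by ``$d(x_i^p)=0$ killing symbols inside $\sym^{\ell}(\Omega^1_Y)$'': the element $(dx_i)^p$ is not zero in $\sym^{\ell}(\Omega^1_Y)$. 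The correct source of the truncation is that the local basis of $F_{\ast}\sO_Y$ only involves exponents $a_i\le p-1$, so the graded piece has local basis indexed by $|a|=\ell$, $0\le a_i\le p-1$, i.e.\ the degree-$\ell$ part of $\sO_Y[dx_1,\dots,dx_n]/((dx_i)^p)$; the complex in the statement is then the Koszul-type resolution of this sheaf, exhibiting $T^{\ell}(\Omega^1_Y)$ as a quotient of $\sym^{\ell}(\Omega^1_Y)$, not as a subsheaf of $(\Omega^1_Y)^{\otimes\ell}$. Relatedly, in characteristic $p$ there is no canonical ``symmetric part'' of the tensor power isomorphic to $\sym^{\ell}$: the image of the iterated $\nabla^{\ell}$ consists of symmetric tensors (divided-power monomials), and these agree with the truncated symmetric power only because all exponents are $<p$, so the factors $a_1!\cdots a_n!$ are units. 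These points are exactly what Sun's local computation supplies; until they are written out, your text is a strategy consistent with the known proof rather than a proof.
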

\begin{lem} If $\Omega_Y^1$ is strongly semistable with
$\mu(\Omega_Y^1)>0$ and $W$ is strongly semistable, then the
filtration defined in Theorem 2.7
$$0=V_{n(p-1)+1}\subset V_{n(p-1)}\subset \cdots
\subset V_1\subset V_0=V=F^{\ast}(F_{\ast}W)$$ is the
Harder-Narasimhan filtration of $F^{\ast}(F_{\ast}W)$.
\end{lem}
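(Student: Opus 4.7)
The plan is to verify the two defining properties of the Harder-Narasimhan filtration for the (reindexed) Sun filtration and then invoke uniqueness. After reindexing $HN_i := V_{n(p-1)+1-i}$ for $0 \le i \le n(p-1)+1$, we obtain an increasing filtration $0 = HN_0 \subset HN_1 \subset \cdots \subset HN_{n(p-1)+1} = V$, and Theorem~2.7 identifies each graded piece as $gr_i^{HN} \cong W \otimes_{\sO_Y} T^{n(p-1)+1-i}(\Omega_Y^1)$.

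For the slope condition, note that every term in the resolution of $T^{\ell}(\Omega_Y^1)$ in Theorem~2.7, namely $\sym^{\ell-qp}(\Omega_Y^1) \otimes F^{\ast}\Omega_Y^q$, has slope $(\ell-qp)\mu(\Omega_Y^1) + q\cdot p\cdot \mu(\Omega_Y^1) = \ell\,\mu(\Omega_Y^1)$. Splitting the resolution into short exact sequences and using additivity of $c_1$ inductively yields $\mu(T^{\ell}(\Omega_Y^1)) = \ell\,\mu(\Omega_Y^1)$, so $\mu(gr_i^{HN}) = \mu(W) + (n(p-1)+1-i)\,\mu(\Omega_Y^1)$. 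Since $\mu(\Omega_Y^1)>0$, these slopes are strictly decreasing in $i$, verifying property~(2) of the HN filtration.

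For the semistability of each graded piece, I would tensor the resolution of $T^{\ell}(\Omega_Y^1)$ with $W$. Each term $W \otimes \sym^{\ell-qp}(\Omega_Y^1) \otimes F^{\ast}\Omega_Y^q$ is strongly semistable: $W$ and $\Omega_Y^1$ are strongly semistable by hypothesis; symmetric and exterior products of strongly semistable sheaves are strongly semistable; tensor products of strongly semistable sheaves are strongly semistable (Ramanan--Ramanathan); and Frobenius pullback preserves strong semistability by definition. Moreover all these tensor products share the common slope $\mu(W) + \ell\,\mu(\Omega_Y^1)$. The conclusion then follows from an elementary cokernel argument: if $E_{-m}\to\cdots\to E_0 \to Q \to 0$ is exact with each $E_j$ semistable torsion-free of common slope $\nu$ and $Q$ torsion-free, then $Q$ is semistable of slope $\nu$. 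Indeed, setting $K := \im(E_{-1}\to E_0) = \ker(E_0 \to Q)$, one has $\mu(K) \le \nu$ (subsheaf of semistable $E_0$) and $\mu(K) \ge \nu$ (torsion-free quotient of semistable $E_{-1}$), hence $\mu(K) = \nu$; then for any subsheaf $Q' \subset Q$ with torsion-free quotient, its preimage $E_0' \subset E_0$ satisfies $\mu(E_0') \le \nu$ by semistability of $E_0$, and the short exact sequence $0 \to K \to E_0' \to Q' \to 0$ forces $\mu(Q')\le \nu$. Property~(1) follows, and uniqueness of the HN filtration identifies the Sun filtration with it.

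The main obstacle is the crucial use of strong semistability. In positive characteristic the tensor product of merely semistable sheaves need not be semistable, so the hypotheses ``$\Omega_Y^1$ strongly semistable'' and ``$W$ strongly semistable'' are indispensable in order to conclude that every term of the tensored resolution is semistable. A secondary issue is treating the resolution of Theorem~2.7 as a genuine exact complex of torsion-free (in fact locally free) sheaves of a common slope, so that the cokernel argument above applies cleanly; it is tempting but unnecessary to try to prove the stronger statement that $T^{\ell}(\Omega_Y^1)$ itself is strongly semistable.
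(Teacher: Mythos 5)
Your proposal is correct, and it follows the same skeleton as the paper: identify the graded pieces of Sun's canonical filtration as $W\otimes T^{\ell}(\Omega_Y^1)$ via Theorem~2.7, show their slopes are $\mu(W)+\ell\,\mu(\Omega_Y^1)$ and hence strictly ordered since $\mu(\Omega_Y^1)>0$, deduce semistability of the graded pieces from the resolution of $T^{\ell}(\Omega_Y^1)$ whose terms are strongly semistable of the common slope $\ell\,\mu(\Omega_Y^1)$, and conclude by uniqueness of the HN filtration. The execution of the semistability step differs, though. The paper quotes the slope formula from \cite[Lemma~4.5]{S2} rather than computing it, reduces to the assertion that $T^{\ell}(\Omega_Y^1)$ itself is strongly semistable, and finishes by invoking (without proof) a two-out-of-three principle for short exact sequences of torsion-free sheaves of equal slope, which implicitly must be iterated along the long exact sequence after splitting it into short exact sequences and checking the intermediate kernels have the right slope. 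You instead tensor the resolution with $W$ first and apply a one-step, self-contained cokernel lemma that only uses the last two terms (so only right-exactness of $-\otimes W$ is needed), which sidesteps both the citation to \cite{S2} and the unproved two-out-of-three fact, and deliberately avoids claiming strong semistability of $T^{\ell}(\Omega_Y^1)$; the trade-off is that the paper's route establishes that (true and independently useful) intermediate statement, while yours is more elementary and makes the role of the strong semistability hypotheses (via Ramanan--Ramanathan and Frobenius pullback) completely explicit. Both arguments share the same mild gap of detail in asserting semistability of the tensor product when $W$ is merely torsion free rather than locally free, which is harmless here since the terms of the resolution are locally free and, in the intended examples, $W$ is a line bundle.
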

\begin{proof} From the Lemma 4.5 of \cite{S2}, one can get
$$\mu(V_{\ell}/V_{\ell+1})=\frac{l}{n}K_Y\cdot H^{n-1}+\mu(W).$$ So
the rest is enough to show that $W\otimes_{\sO_Y}
T^{\ell}(\Omega^1_Y)$ is semistable. By $W$ is strongly semistable,
it's enough to show $T^{\ell}(\Omega_Y^1)$ is strongly semistable.
In the long exact sequence of Theorem 2.7, every terms have slope
$\ell\cdot \mu(\Omega_Y^1)$  by direct computations and strongly
semistable by easily checking. Now, the lemma comes from the
following fact: For a short exact sequence of torsion free sheaves
with the same slope:
$$0\rightarrow E_1\rightarrow E_2\rightarrow E_3\rightarrow 0,$$
then one of them is strongly semistable if the other two is strongly
semistable.
\end{proof}

\begin{rmk} Under the above condition, one can deduce $F_{\ast}W$ is
semistable, denote it by $E$. Actually, it a direct corollary of the
following Theorem.
\end{rmk}

\begin{thm}(\cite[Theorem~4.2]{S2}) When $K_Y\cdot H^{n-1}\geq 0$,
we have, for any $E\subset F_{\ast}W$,
$$\mu(F_{\ast}W)-\mu(E)\geq -\frac{\mathrm{I}(W,X)}{P}.$$ In
particular, if $W\otimes T^{\ell}(\Omega_Y^1)$, $0\leq \ell \leq
n(p-1)$, are semistable, then $F_{\ast}W$ is semistalbe. Moreover,
if $K_Y\cdot H^{n-1}>0$, the stability of the bundles $W\otimes
T^{\ell}(\Omega^1_Y)$, $0\leq \ell \leq n(p-1)$, implies the
stability of $F_{\ast}W$.
\end{thm}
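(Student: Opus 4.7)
My plan is to analyze subsheaves of $F_{\ast}W$ after pulling back by Frobenius, using the canonical filtration of Theorem~2.7 on $V := F^{\ast}F_{\ast}W$. For any $E\subset F_{\ast}W$, by Cartier descent $F^{\ast}E\subset V$ is precisely a $\nabla$-invariant subsheaf, and the Frobenius pullback satisfies $\mu(F^{\ast}E)=p\mu(E)$ and $\mu(V)=p\mu(F_{\ast}W)$, so the target inequality is equivalent to $\mu(V)-\mu(F^{\ast}E)\geq -\mathrm{I}(W,X)$.

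I would then filter $F^{\ast}E$ by $N_\ell := F^{\ast}E\cap V_\ell$, producing graded pieces $G_\ell := N_\ell/N_{\ell+1}\hookrightarrow V_\ell/V_{\ell+1}$. The key structural input from Theorem~2.7(2) is the isomorphism $\nabla^\ell: V_\ell/V_{\ell+1}\xrightarrow{\sim} W\otimes T^\ell(\Omega_Y^1)$; combined with the $\nabla$-invariance of $F^{\ast}E$, it forces $G_\ell$ to land inside $G_0\otimes T^\ell(\Omega_Y^1)$, where $G_0 := F^{\ast}E/N_1\subset W$. In particular one obtains the rank constraint $\mathrm{rk}(G_\ell)\leq \mathrm{rk}(G_0)\cdot \mathrm{rk}(T^\ell(\Omega_Y^1))$, and consequently each $\mu(G_\ell)$ can be compared to $\mu(V_\ell/V_{\ell+1})$ up to the instability of $W\otimes T^\ell(\Omega_Y^1)$.

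The proof then becomes a weighted-average computation. Using Lemma~4.5 of \cite{S2}, $\mu(V_\ell/V_{\ell+1})=\mu(W)+(\ell/n)K_Y\cdot H^{n-1}$, so both $\mu(V)$ and the upper bound on $\mu(F^{\ast}E)$ (obtained by bounding each $\mu(G_\ell)$ by the maximal slope of a subsheaf of $W\otimes T^\ell(\Omega_Y^1)$) are affine in $\ell$ with slope coefficient $K_Y\cdot H^{n-1}/n\geq 0$. The discrepancy between the two weighted averages is then controlled by the rank constraints above together with the instabilities of the $W\otimes T^\ell(\Omega_Y^1)$, which is precisely what $\mathrm{I}(W,X)$ packages. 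The hard part is this final bookkeeping: it is essential to use the $\nabla$-invariance via the rank restriction from Theorem~2.7(2), since without it one could produce subsheaves of $V$ concentrated on the top HN piece that violate the desired inequality. Once this is in place, the ``in particular'' statement is immediate because $\mathrm{I}(W,X)$ vanishes when all $W\otimes T^\ell(\Omega_Y^1)$ are semistable, and the stability sharpening follows by tracking strict inequalities whenever $K_Y\cdot H^{n-1}>0$.
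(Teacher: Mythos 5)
First, a point of comparison: the paper does not prove this statement at all --- it is quoted from Sun \cite[Theorem~4.2]{S2} and used as a black box --- so the only meaningful benchmark is Sun's argument. Your skeleton (pass to $V=F^{\ast}F_{\ast}W$, recognize $F^{\ast}E$ as the $\nabla$-invariant subsheaf attached to $E\subset F_{\ast}W$ by Cartier descent, intersect with the canonical filtration $V_{\bullet}$ of Theorem 2.7, and bound $\mu(F^{\ast}E)$ by a weighted average over the graded pieces $G_{\ell}=(F^{\ast}E\cap V_{\ell})/(F^{\ast}E\cap V_{\ell+1})\hookrightarrow V_{\ell}/V_{\ell+1}\cong W\otimes T^{\ell}(\Omega^1_Y)$) is indeed the shape of that argument, and the reduction via $\mu(F^{\ast}E)=p\mu(E)$, $\mu(V)=p\mu(F_{\ast}W)$ is fine. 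But the proposal stops exactly where the theorem lives: the step you call ``the final bookkeeping'' is the entire content, and the structural input you offer in its place is both unjustified and too weak to carry it.

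Concretely, what $\nabla$-invariance gives is injections $G_{\ell}\hookrightarrow G_{\ell-1}\otimes\Omega^1_Y$, obtained by restricting the injective maps $V_{\ell}/V_{\ell+1}\to (V_{\ell-1}/V_{\ell})\otimes\Omega^1_Y$ of Theorem 2.7(2); your stronger claim that $G_{\ell}$ lands in $G_0\otimes T^{\ell}(\Omega^1_Y)$ does not follow as stated ($T^{\ell}$ is defined as a quotient of $\mathrm{Sym}^{\ell}$, not a subsheaf of $(\Omega^1_Y)^{\otimes\ell}$), and in any case the resulting rank inequalities do not close the estimate. Since $\mu(V_{\ell}/V_{\ell+1})=\mu(W)+\frac{\ell}{n}K_Y\cdot H^{n-1}$ increases with $\ell$, what must be excluded is a subsheaf whose graded ranks are weighted toward large $\ell$; but a rank sequence such as $(r_0,\dots,r_{n(p-1)})=(1,1,1,2,1)$ for $n=2$, $p=3$, $\mathrm{rk}(W)=1$ satisfies both your constraint $r_{\ell}\leq \mathrm{rk}(G_0)\,\mathrm{rk}(T^{\ell}(\Omega^1_Y))$ and even the chain constraint $r_{\ell}\leq n\,r_{\ell-1}$, yet has $\sum \ell r_{\ell}/\sum r_{\ell}=13/6>n(p-1)/2$, so the upper bound your bookkeeping produces for $\mu(F^{\ast}E)$ exceeds $\mu(V)$ even when every $W\otimes T^{\ell}(\Omega^1_Y)$ is semistable, i.e. when $\mathrm{I}(W,X)=0$. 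In the curve case $n=1$ the chain of injections forces $r_{\ell}$ to be nonincreasing, whence the average of $\ell$ is at most $(p-1)/2$ and the argument closes; for $n>1$ a genuinely finer analysis is required (this is where \cite{S2} does its real work, and where the hypothesis $K_Y\cdot H^{n-1}\geq 0$ and the precise definition of $\mathrm{I}(W,X)$ enter), and one must in addition track strict inequalities through that analysis to obtain the stability assertion when $K_Y\cdot H^{n-1}>0$. As it stands your proposal is a plausible plan that reproduces the first moves of Sun's proof, but the decisive estimate is missing.
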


\begin{lem}(\cite[Corollary~5.4]{AL}) Let $E$ be a torsion-free
sheaf of rank $r>1$ on $Y$. Assume that $E$ is $\mu$ semistable with
respect to $(D_1,\cdots, D_{n-1})$ and let $0=E_0\subset E_1\subset
\cdots \subset E_m=E$ be the corresponding Jordan-H\"{O}lder
filtration of $E$, set $F_i=E_i/E_{i-1}$, $r_i=rk F_i$. Let $D\in
|kD_1|$ be a normal divisor such that all the sheaves $F_i|_D$ have
no torsion. If $$k> \lfloor\frac{r-1}{r} \Delta(E)D_2\cdots D_{n-1}+
\frac{1}{dr(r-1)}+ \frac{(r-1)\beta_r}{dr}\rfloor$$ then $E|_D$ is
$\mu$ semistable with $(D_2|_D,\cdots, D_{n-1}|_D)$.
\end{lem}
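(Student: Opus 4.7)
The plan is to argue by contradiction following the Mehta--Ramanathan/Flenner strategy: reduce first to the $\mu$-stable case via the Jordan--H\"older filtration, and then apply Langer's restriction theorem for $\mu$-stable sheaves to each factor.

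First, I would reduce to showing that each JH factor $F_i|_D$ is $\mu$-semistable with respect to $(D_2|_D,\dots,D_{n-1}|_D)$. By hypothesis each $F_i|_D$ is torsion-free, so restricting the filtration $0=E_0\subset E_1\subset\cdots\subset E_m=E$ to $D$ keeps all the short exact sequences $0\to E_{i-1}|_D\to E_i|_D\to F_i|_D\to 0$ exact. Since an iterated extension of $\mu$-semistable torsion-free sheaves of the same slope is itself $\mu$-semistable, once each $F_i|_D$ is shown to be $\mu$-semistable of slope $\mu(E)$, so is $E|_D$.

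Next I would invoke Langer's basic restriction theorem for $\mu$-stable sheaves: for the $\mu$-stable torsion-free sheaf $F_i$ of rank $r_i$, the restriction $F_i|_D$ is $\mu$-semistable provided
$$k\;>\;\Bigl\lfloor\frac{r_i-1}{r_i}\Delta(F_i)D_2\cdots D_{n-1}+\frac{1}{d\,r_i(r_i-1)}+\frac{(r_i-1)\beta_{r_i}}{d\,r_i}\Bigr\rfloor,$$
where the constants $\beta_{r_i}$ encode Langer's Frobenius correction in positive characteristic. It then remains to verify that the single hypothesis on $k$ (stated in terms of $\Delta(E)$ and $r$) implies this bound for every $i$.

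The main obstacle, and key input, is the passage from $\Delta(E)$ to the $\Delta(F_i)$. Here I would use Langer's identity
$$\frac{\Delta(E)}{r}\;=\;\sum_i\frac{\Delta(F_i)}{r_i}\;-\;\frac{1}{r}\sum_{i<j}r_i r_j\Bigl(\frac{c_1(F_i)}{r_i}-\frac{c_1(F_j)}{r_j}\Bigr)^{\!2}.$$
Intersecting with $D_2\cdots D_{n-1}$ and noting that the classes $\xi_{ij}=c_1(F_i)/r_i-c_1(F_j)/r_j$ are orthogonal to $D_1\cdots D_{n-1}$ (all JH factors share the slope $\mu(E)$), the Hodge index theorem applied to the complete intersection of $D_2,\dots,D_{n-1}$ gives $\xi_{ij}^2\cdot D_2\cdots D_{n-1}\leq 0$. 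Combined with Bogomolov's inequality $\Delta(F_j)\cdot D_2\cdots D_{n-1}\geq 0$ for each stable factor, this yields
$$\frac{\Delta(F_i)\,D_2\cdots D_{n-1}}{r_i}\;\leq\;\frac{\Delta(E)\,D_2\cdots D_{n-1}}{r}.$$
Together with $r_i\leq r$ and the monotonicity $\beta_{r_i}\leq\beta_r$, the hypothesized bound on $k$ then implies the stable-restriction bound for every $i$. The delicate point, as always in positive characteristic, is controlling the constants $\beta_r$ and ensuring the floor functions behave correctly under the uniform bound.
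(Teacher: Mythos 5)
The paper itself gives no argument for this lemma---it is quoted verbatim from Langer \cite[Corollary~5.4]{AL}---so your sketch has to be measured against Langer's own proof, whose overall shape you do reproduce: restrict the Jordan--H\"older filtration to $D$, apply the restriction theorem for $\mu$-stable sheaves (Langer's Theorem~5.2) to each factor $F_i$, and compare discriminants via the filtration identity and the Hodge index theorem. The first reduction is essentially fine, with one small correction: exactness of the restricted sequences $0\to E_{i-1}|_D\to E_i|_D\to F_i|_D\to 0$ follows from $F_i$ being torsion-free on $Y$ (since ${\rm Tor}_1(F_i,\sO_D)=\ker(F_i(-D)\to F_i)=0$), not from $F_i|_D$ being torsion-free; the latter hypothesis is what makes the slope bookkeeping and the ``extension of semistables of equal slope is semistable'' step legitimate. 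The identity $\Delta(E)/r=\sum_i\Delta(F_i)/r_i-\tfrac{1}{r}\sum_{i<j}r_ir_j\xi_{ij}^2$, the orthogonality $\xi_{ij}\cdot D_1\cdots D_{n-1}=0$, and the Hodge-index inequality $\xi_{ij}^2D_2\cdots D_{n-1}\le 0$ are all correct and are indeed the right tools.

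The genuine gap is the step ``combined with Bogomolov's inequality $\Delta(F_j)D_2\cdots D_{n-1}\ge 0$ for each stable factor.'' In characteristic $p$ this inequality can fail for $\mu$-stable sheaves (stable does not imply strongly semistable), and this failure is exactly why the constants $\beta_r$ appear in Langer's bounds in the first place: what is actually available is Langer's characteristic-$p$ Bogomolov inequality, in which $\Delta(F_j)D_2\cdots D_{n-1}$ is only bounded below by a negative correction term controlled by $\beta_{r_j}$ (i.e.\ by $L_{\max}(\Omega^1_Y)$). So your comparison only yields $\Delta(F_i)D_2\cdots D_{n-1}/r_i\le \Delta(E)D_2\cdots D_{n-1}/r+(\text{corrections})$, and you must then show that these corrections, together with the term $\tfrac{1}{dr_i(r_i-1)}$---which is \emph{larger} than $\tfrac{1}{dr(r-1)}$ when $r_i<r$, so the termwise comparison you invoke goes the wrong way for that term---are absorbed by the single stated bound on $k$; this is where the $(r-1)\beta_r/(dr)$ term and the floor have to do real work, and it is precisely the part you defer as ``delicate.'' You also need to handle rank-one factors separately, since the stable-restriction bound is meaningless for $r_i=1$ (though there the restriction is automatically stable). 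As written, the chain of inequalities does not close once the classical Bogomolov inequality is replaced by its characteristic-$p$ version, so the proposal is a correct skeleton of Langer's argument but is missing its central quantitative content.
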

\begin{rmk} For the notation, one can see \cite{AL} for detail. Note
that if $E$ is torsion free then the restriction $E|_D$ is also
torsion free for a general divisor $D$ in a base point-free system
(see \cite[Corollary~1.1.14]{DM} for a precise statement). Take
$k\gg 0$, using the Bertini's Theorem (cf. \cite[Theorem~8.8]{AG}),
we can choose $D\in |kD_1|$ to be smooth projective variety and
$E|_D$ is a semistable torsion free sheaf with respect to
$(D_2|_D,\cdots, D_{n-1}|_D)$. Repeating the above process, we can
get a smooth projective curve $X$ which is a subvariety of $Y$, such
that $E|_C$ is a semistable vector bundle. From our choice, the
curve $C$ has large genus.
\end{rmk}

Now, we can construct the example using the above preliminaries.
\begin{construction} Let $Y$ to be a smooth projective variety of dimension $n$ with
$\Omega^1_Y$ is strongly semistable and $\mu(\Omega_Y^1)>0$, for
example we can take $Y=C\times\cdots\times C$ with $C$ is a smooth
curve of genus $G\geq 2$. Let $W$ to be a strongly semistable bundle
on $Y$, for example we can take $W$ to be the copies of line bundle.
Consider the following diagram:

\[
\begin{CD}
Y @>F>>
Y\\
@AAA @AAA\\
X@>F>>X
\end{CD}
\]
where $X$ is smooth projective curve which is chosen as in Remark
2.13, and the commutative diagram comes from Lemma 2.6. Denote
$F_{\ast}W$ by $E$, then $E$ is semistable from Remark 2.9 and
$E|_X$ is semistable from Remark 2.12. Denote the Harder-Narasimhan
filtration of $F^{\ast}E$
$$0=V_{n(p-1)+1}\subset V_{n(p-1)}\subset \cdots
\subset V_1\subset V_0=V=F^{\ast}(F_{\ast}W)$$ by $\bf{HN}$, we have
the length of $\bf{HN}$ is $n(p-1)+1$. Consider the restriction of
the filtration $HN$ to $X$ which denoted by $\bf{HN}|_X$,
$(V_i/V_{i+1})|_X=(V_i|_X)/(V_{i+1}|_X)$ is semistable by Remark
2.12 and  $\mu((V_i|_X)/(V_{i+1}|_X))$ is strictly increasing.
Meanwhile, $(F^{\ast}E)|_X=F^{\ast}(E|_X)$ by the commutative
diagram and $\bf{HN}|_X$ is the Harder-Narasimhan filtration of
$F^{\ast}(E|_X)$. Above all, we construct a semistable bundle $E|_X$
over a smooth curve $X$ with genus $g>2$, such that the length of
the Harder-Narasimhan filtration is $n(p-1)+1$. So there is no bound
for the length of the Harder-Narasimhan filtration of a bundle as
Frobenius pull-back of a semistable bundle.
\end{construction}

\bibliographystyle{plain}

\renewcommand\refname{References}

\end{document}